\newtheorem{theorem}{Theorem}[section]
\newtheorem{lemma}[theorem]{Lemma}
\newtheorem{corollary}[theorem]{Corollary}
\theoremstyle{remark}
\newtheorem{remark}[theorem]{Remark}
 \title{The number of surfaces of fixed genus in an alternating link complement}
 \author{Joel Hass } 
 \thanks{Partially supported by NSF grant 1117663, BSF grant 2012188, and the Ambrose Monell Foundation}
\address{Department of Mathematics, University of California, Davis
California 95616 \& School of Mathematics, Institute for Advanced Study, Princeton NJ 08540}
\email{hass@math.ucdavis.edu}
\author{Abigail Thompson}
\thanks{Partially supported by NSF grant DMS-1207765; Neil Chriss and Natasha Herron Chris Founders' Circle Member,  IAS 2015-2016}
\address{Department of Mathematics, University of California, Davis
California 95616 \& School of Mathematics, Institute for Advanced Study, Princeton NJ 08540}
\email{thompson@math.ucdavis.edu}
 \author{Anastasiia Tsvietkova} 
 \thanks{Partially supported by NSF grant DMS-1406588}
\address{Department of Mathematics, University of California, Davis
California 95616}
  \email{tsvietkova@math.ucdavis.edu}
 \keywords{alternating link, incompressible surface}
 \date{}
 \subjclass[2010]{}
\begin{document}
 
  \footnotesize
  \begin{abstract} 
Let $L$ be a prime alternating link with $n$ crossings. We show that for each fixed $g$, the number of genus $g$ incompressible surfaces in the complement of $L$ is bounded by an explicitly given polynomial in $n$. 
Previous bounds were exponential in $n$.
  \end{abstract}
  
 \maketitle
 \normalsize
 
 \section{Overview}
 Let $L$ be a non-split prime alternating link with an $n$-crossing diagram and let  $M_L$ denote the complement of $L$ in $S^3$. In this paper we give a bound on the number of isotopy classes of closed incompressible surfaces of genus $g$  embedded in $M_L$. For each choice of genus $g$, this bound is a polynomial in $n$.  For example, we show that the number of genus two incompressible surfaces in $M_L$ is at most $12n^3$.  More generally, we show that the number of genus $g$ incompressible surfaces in the complement of an $n$-crossing alternating diagram is at most $ C_g n^{40g^2}$, where $C_g$ is a constant depending only on the genus.
 
 The surfaces we consider are closed and incompressible, but not necessarily disjoint. The number of disjoint incompressible surfaces in a manifold
 is much easier to bound, as  originally observed by Kneser \cite{Kneser:29}. Kneser showed that the number of such surfaces is bounded by a linear function of the number of tetrahedra $t$ required to triangulate the manifold.  For link complements,
 the number $t$ is itself a linear multiple of the number of crossings in a link diagram $n$.
 
 In non-hyperbolic manifolds there can be infinitely many distinct incompressible surfaces of a fixed genus, as for example in the 3-torus, which has infinitely many non-isotopic essential tori.
 In a hyperbolic manifold the number of such surfaces is always finite, as can be seen by isotoping each surface to a least area representative and applying
 the Gauss-Bonnet Theorem and Schoen's curvature estimates \cite{Schoen, Hass:95}. This argument applies also to $\pi_1$-injective immersions, but  
 is not constructive and gives no explicit bound on the number of surfaces of a given genus.
 
 One approach to counting the number of embedded incompressible surfaces of genus $g$ is through normal surface theory.  
 Each incompressible surface can be isotoped to be normal, and can then be expressed as a sum  of
 certain fundamental normal surfaces. However this process leads to an exponential bound on the number
 of incompressible surfaces of genus $g$, either in terms of the number of tetrahedra in a triangulation $t$, or
 in terms of the crossing number $n$.  The main issue is that the number of fundamental surfaces of a given genus, and even
 the number of vertex fundamental surfaces, can be exponential
 in $t$ \cite{HassLagariasPippenger}.  A second difficulty in applying normal surfaces
  is that an incompressible surface may not be fundamental, so that 
 one must also count all Haken sums of normal surfaces that can result in a given genus \cite{Haken:61}.
 
Bounds for the number of  immersed $\pi_1$-injective surfaces in a closed hyperbolic 3-manifold, up to homotopy,
have been obtained by Masters and by Kahn and Markovic.
Masters showed that the number of surface subgroups in a closed hyperbolic 
 3-manifold $M$, up to conjugacy, is at most $g^{Cg}$ for some $C$ \cite{Masters}.
Kahn and Markovic  showed that this number is bounded below by  $(cg)^{2g}$ and above by  $(Cg)^{2g}$ for constants $c,C$ depending on $M$ \cite{KahnMarkovic}. 
These papers consider the number of immersed surfaces in a fixed closed manifold as  the genus grows. 
In contrast, we obtain bounds on the number of embedded surfaces in terms of both the genus of the surface and the complexity of the hyperbolic manifold. Our bounds apply to a class of  link complements, as opposed to the closed manifolds previously considered.

\section{Standard position for a surface}\label{IntersectionPattern}
   
We first review techniques of Menasco that place an incompressible surface  in $M_L$ into a standard position with respect to a projection plane $Q$ for $L$  \cite{Menasco1984}.   
We begin by isotoping the link $L$ so that its diagram is alternating and reduced, and  so that it lies in the projection 
plane  $Q$ with the exception of two small arcs near each crossing,
 one of which drops below $Q$, and one of which rises above it. 
 $L$ then lies on a union of two overlapping 2-spheres in $S^3$, $S_+$ and $S_-$, which agree with $Q$ 
 except along bubbles surrounding each crossing. At the bubbles, $S_+$ and $S_-$ go over the top and bottom hemispheres of each bubble, respectively. 
We denote by $B_+$ and $B_-$ the balls in $S^3$ lying respectively above  $S_+$ and below $S_-$.

We then compress along curves that are parallel to a meridian of the link.
A {\em meridianal compression} on $F_0$ surgers a curve in $F_0$,
 parallel in $M_L$  to a meridian of $L$, and creates a new pair  of meridianal punctures. 
 
A surface is  {\em meridianally incompressible} if no meridianal surgery can be performed.  
Menasco showed in \cite{Menasco1984} how to isotope a meridianally incompressible surface into {\em standard position} with respect to $S_+$ and $S_-$.
The properties of a surface in standard position, which we call  $F$,  are summarized in Lemma~\ref{properties}.
  
We can assign to each curve  $C$ in $F  \cap S_+$ or  $F \cap  S_-$  a word in the letters $P$ and $S$,  defined up to cyclic order, with $P$ indicating
 a point where $C$ crosses a strand of the link along a puncture and $S$ indicating that the curve passes through a bubble region adjacent to a saddle of $F$.
 Figures~\ref{Figure1.1}, \ref{Figure1.2} and \ref{Figure1.3} depict a component of $F \cap S_+$ giving an $SSSS$,  $SPPPPP$  and $PPPS$ curve on $S_+$.

We define a complexity  $|F| = p+s+c$ where $p$ is the sum of the number of $P$'s associated to all curves in $F \cap S_+$, 
$s$ is the sum of the number of $S$'s associated to these words, and $c$ is the sum of the number of curves in  $ F  \cap  S_+$ 
plus the number of curves in $  F  \cap S_-$. If $F$ minimizes this sum among  standard position 
surfaces in its isotopy class, then $F$ is then said to be in {\em $|F|$ minimizing standard position}. 
 
 \begin{lemma}\label{properties} 
Suppose an incompressible and meridianally incompressible surface $F$ is in $|F|$ minimizing standard position.
Then the curves of $F\cap S_+$ and  $F\cap S_-$ and the associated words in  the letters $P,S$ satisfy the following properties: 
 \begin{enumerate}
 \item \label{Disks}  Each curve of $F \cap S_+$ bounds a disk in $F \cap B_+$, and similarly each  curve of $F \cap S_-$ bounds a disk in $F \cap B_-$ .
\item \label{SaddleTwice}  No curve passes through the same saddle twice.
 \item \label{Innermost} An innermost curve of  $F \cap S_+$  or  $F\cap S_-$ does not go through two successive saddles.
 \item \label{Balance} An equal number of curves pass through each side of a saddle.
 \item \label{ArcTwice} No curve has two successive punctures on the same arc of $L$ with no intermediate saddles.
 \item \label{SaddleArc} No curve passes through a saddle and then crosses an arc of $L$ adjacent to the saddle.
  \item \label{NoSSSS}  No word has the form  $P^i S ^j$ for $j>0$. 
 \item \label{TwoPs} Each word contains at least two $P$'s.
 \item \label{EvenLength4} Each word has length at least four.
 \end{enumerate}
 \end{lemma}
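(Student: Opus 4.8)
The plan is to derive all nine properties from two ingredients: Menasco's construction of standard position, which already records how $F$ meets the bubbles and the two balls $B_\pm$, and the assumed minimality of $|F| = p+s+c$, which rules out any local configuration that could be simplified by an isotopy. The unifying principle is an exchange argument: if a property fails, I would exhibit a local move that keeps $F$ incompressible and meridianally incompressible, returns it to standard position, and strictly decreases one of $p, s, c$ without increasing the others enough to offset the gain — contradicting that $F$ is $|F|$-minimizing.

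First I would establish the structural properties (\ref{Disks}), (\ref{SaddleTwice}) and (\ref{Balance}), which come almost directly from standard position. For (\ref{Disks}), I would run an innermost-curve argument in the ball $B_+$: since $F$ is incompressible and $B_+$ is a ball whose only link data are the over-strands at the bubbles, an innermost curve of $F \cap S_+$ bounds a disk of $F \cap B_+$; closed components of $F \cap B_+$ are ruled out because they would be compressible, and capping off and working outward shows that every curve bounds a disk, with the symmetric statement in $B_-$. Property (\ref{SaddleTwice}) then follows, since a curve traversing the same saddle twice would, using the disk from (\ref{Disks}), permit a complexity-reducing surgery. Property (\ref{Balance}) is a matching statement: because $F$ is closed and each saddle band has two sides that must be capped off by the disks of (\ref{Disks}), equal numbers of arcs must enter the bubble from each side.

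Next I would handle the local properties (\ref{Innermost}), (\ref{ArcTwice}) and (\ref{SaddleArc}) by the exchange principle. For (\ref{ArcTwice}), two successive punctures on the same arc of $L$ with no intermediate saddle bound a bigon between $F$ and $L$; isotoping $F$ across this bigon removes the two punctures and lowers $p$ by two, so meridianal incompressibility together with $|F|$-minimality forbids it. For (\ref{SaddleArc}), a saddle followed immediately by a crossing of an adjacent arc yields a comparable disk that can be pushed off, reducing $s$ or $p$. For (\ref{Innermost}), an innermost curve running through two successive saddles produces, via the disk it bounds in $B_\pm$, an isotopy that either cancels the two saddles or reduces the number of curves $c$; here the reduced alternating hypothesis guarantees that the adjacent bubble data are of the expected form. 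Properties (\ref{NoSSSS}), (\ref{TwoPs}) and (\ref{EvenLength4}) are then deduced from the preceding ones together with the alternating structure: a word $P^i S^j$ with $j>0$ forces a run of saddles terminating at a single puncture block, which I would contradict using (\ref{SaddleArc}) and the alternating condition at the bubbles; a word with fewer than two $P$'s bounds a disk meeting at most one strand of $L$, hence a compression or a meridianal compression, excluded by hypothesis, giving (\ref{TwoPs}); and combining (\ref{SaddleTwice}), (\ref{SaddleArc}) and (\ref{NoSSSS}) with (\ref{TwoPs}) forces length at least four, giving (\ref{EvenLength4}).

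The main obstacle I anticipate is bookkeeping the net effect of each exchange move on $|F| = p+s+c$: a surgery that removes punctures or saddles can split or merge intersection curves and so change $c$, and a move lowering $p$ might raise $s$ or $c$; for each move I must therefore verify that the total complexity strictly drops rather than merely redistributes among the three terms. A secondary subtlety is confirming that every simplifying isotopy can be performed so that the result is again in standard position and remains incompressible and meridianally incompressible, so that the minimality of $|F|$ can legitimately be invoked to close each argument.
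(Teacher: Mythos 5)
Your unifying strategy---derive all nine properties from minimality of $|F|$ via exchange moves---cannot succeed for the properties that are the real content of this lemma, because several of them are simply false for non-alternating diagrams. Minimality of $|F|$, incompressibility, meridianal incompressibility, and primality all make sense for an arbitrary projection, yet for non-alternating projections curves with words $P^iS^j$, and curves with fewer than two $P$'s, do occur. Hence properties (3), (7) and (8) cannot follow from the exchange principle alone; they require the alternating hypothesis to enter concretely, and your proposal invokes ``the alternating condition at the bubbles'' only as a phrase, never as an argument. This is exactly where the paper does not argue from scratch: it quotes (7) and (8) from Lemma 2 of Menasco's 1984 paper and (2), (5), (6) from Lemma 3.2 of Hass--Menasco, reserving direct arguments for the easy items (incompressibility plus minimality for (1), one arc per side of each saddle for (4), the opposite-sides observation for (3), and parity plus primality for (9)).

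Two of your specific reductions also fail as stated. For (7) you propose to use (6): but when a curve exits a saddle, the next arc of $L$ it punctures need only lie on the boundary of the face of the diagram through which the curve travels---it need not be one of the arcs adjacent to the saddle's crossing---so (6) does not forbid $P^iS^j$. For (8) you claim a curve with fewer than two $P$'s ``bounds a disk meeting at most one strand of $L$, hence a compression or a meridianal compression'': if the word contains any $S$'s, the curve runs through saddles of $F$ itself, so the disk it bounds on the projection sphere is not disjoint from $F$ and yields no surgery, while the disk it bounds inside $F\cap B_+$ (property (1)) is part of $F$ and likewise gives nothing; there is no compressing disk to be had without a genuine argument using alternation. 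Finally, your derivation of (9) is incomplete: excluding $SS$, $PS$ by (7) and $PP$ by (5) plus primality still leaves odd-length words such as $PPP$; the paper eliminates these with a parity observation---a closed curve on the projection sphere crosses the diagram an even number of times---which never appears in your proposal.
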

 
\begin{proof}
\begin{figure}
\centering
\begin{subfigure}[b]{0.3 \textwidth}
\centering
\includegraphics[scale=0.33]{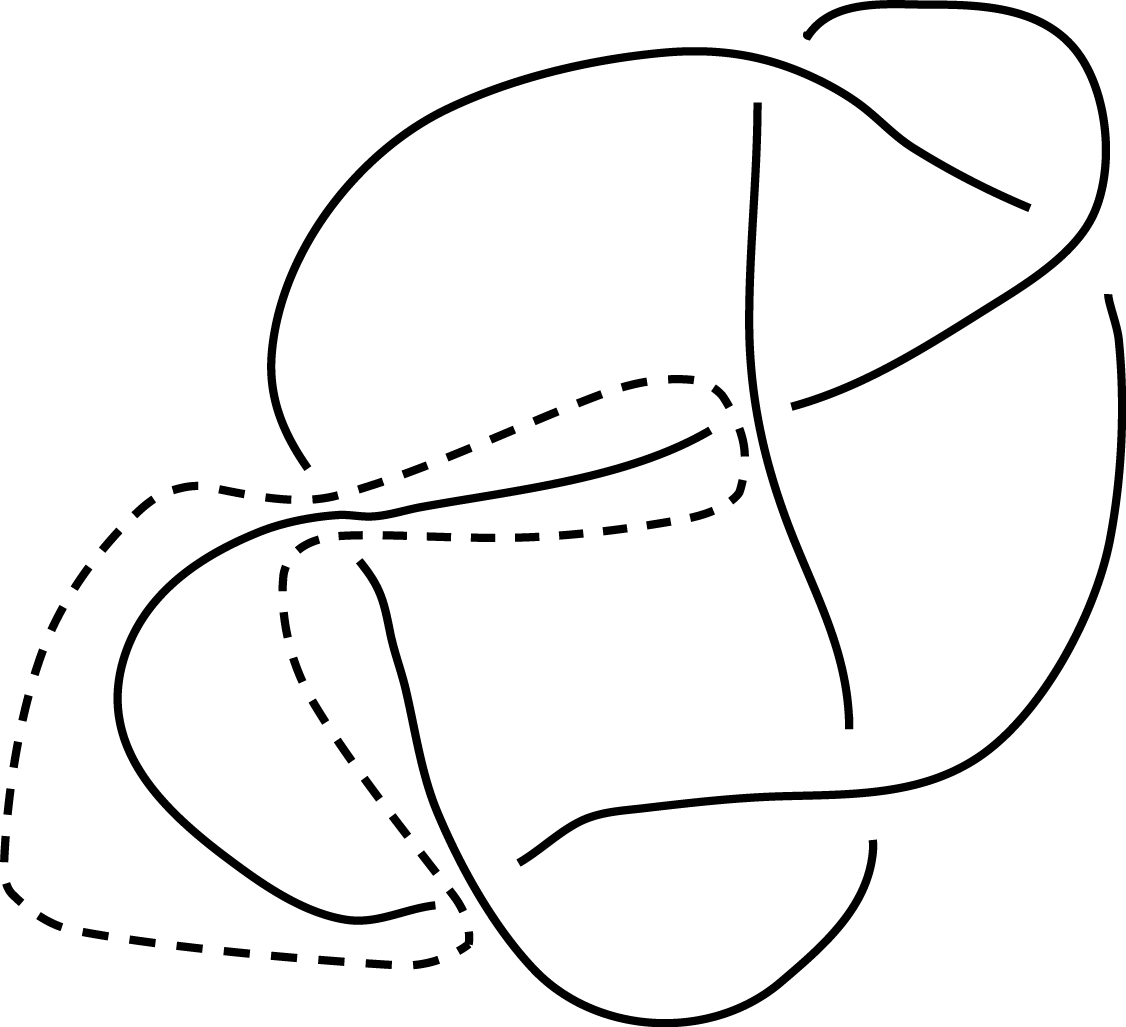}
\caption{}
\label{Figure1.1}
\end{subfigure}
\begin{subfigure}[b]{0.3 \textwidth}
\centering
\includegraphics[scale=0.33]{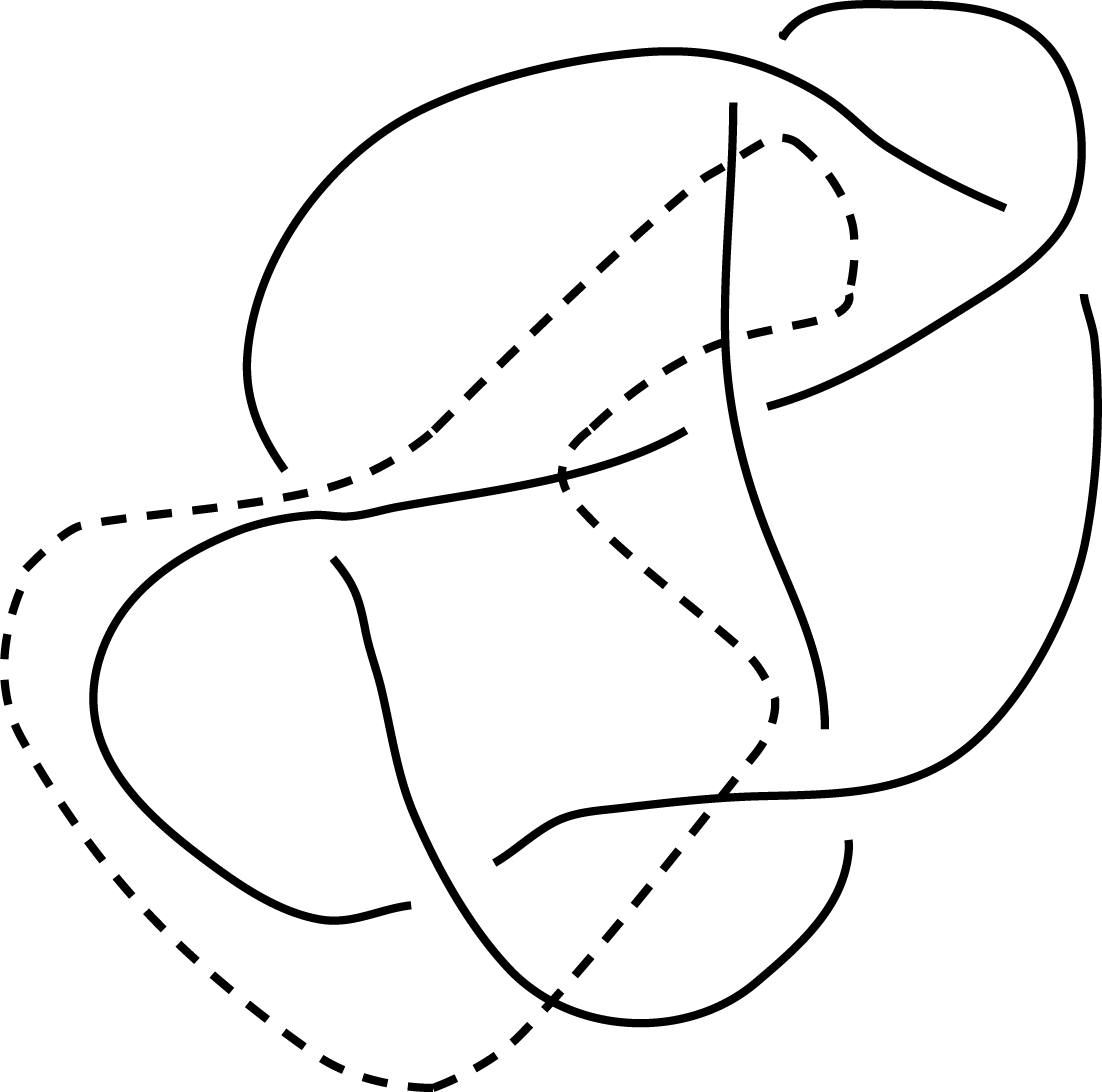}
\caption{}
\label{Figure1.2}
\end{subfigure}
\begin{subfigure}[b]{0.3 \textwidth}
\centering
\includegraphics[scale=0.33]{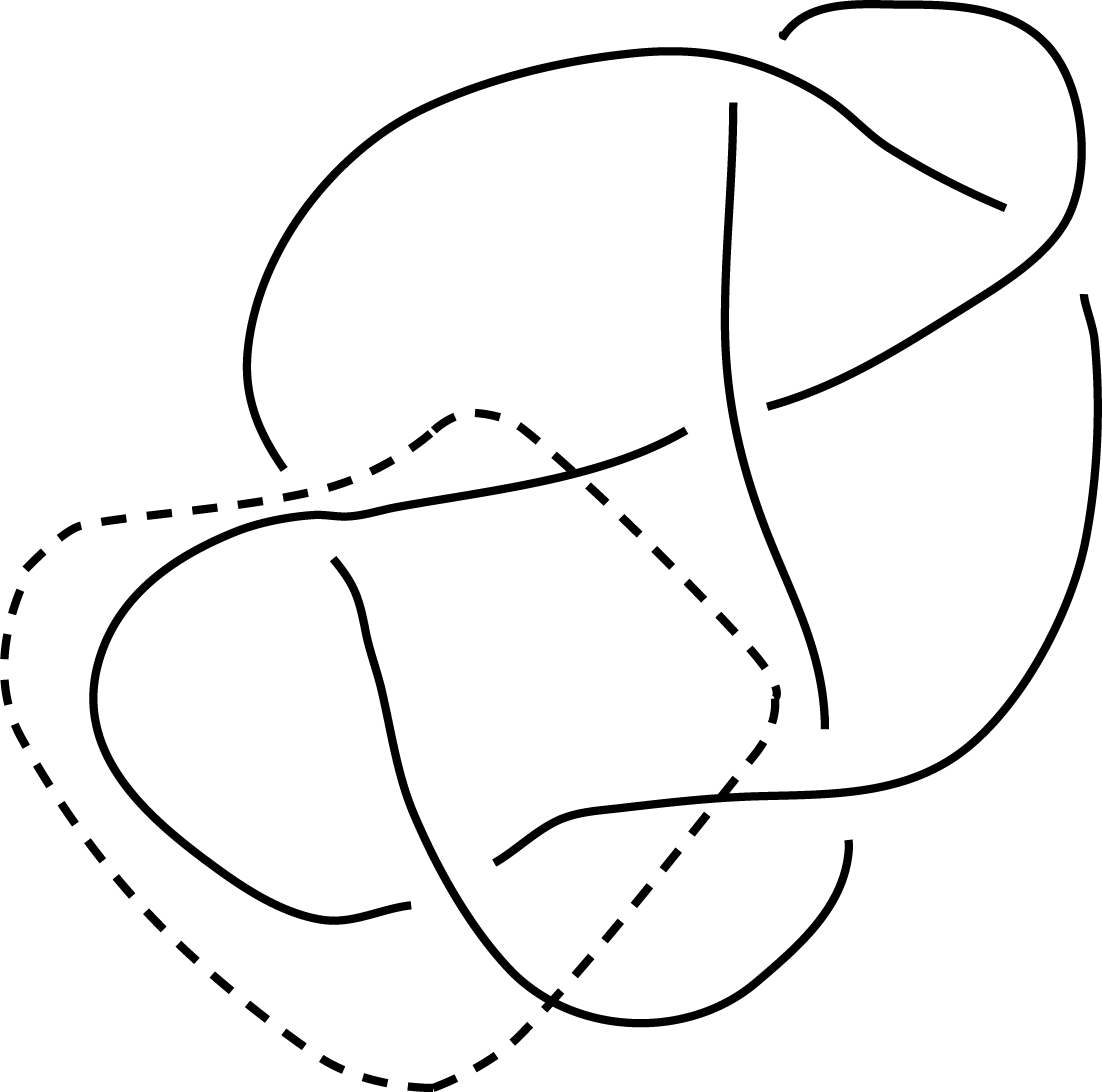}
\caption{}
\label{Figure1.3}
\end{subfigure}
\caption{The link $L$ and a curve from $F\cap S_\pm$}
\end{figure}

Property   (\ref{Disks}) follows from the incompressibility of $F$ and that fact that the complexity   $|F| = p+s+c$ is minimized.
 Figures~\ref{Figure1.1}, \ref{Figure1.2} and \ref{Figure1.3}  show curves that
are ruled out by (\ref{SaddleTwice}), (\ref{ArcTwice}), (\ref{SaddleArc}) respectively. 
Properties (\ref{SaddleTwice}), (\ref{ArcTwice}), (\ref{SaddleArc}) are proven in Lemma 3.2 of \cite{HassMenasco}.
Property   (\ref{Innermost})  holds since  successive saddles lie on opposite sides of a curve, and thus
cannot exist for an innermost curve. Property  (\ref{Balance}) follows from the fact that each saddle of a surface results in one
intersection curve on each side of a crossing. The proofs of (\ref{NoSSSS}) and (\ref{TwoPs}) are given in Lemma 2 of \cite{Menasco1984}. 
For (\ref{EvenLength4}), first note that any collection of closed curves on a plane intersect in an even number of points, so that $w$ has an even number of letters. The words $PS$  and $SS$ are not possible by (\ref{NoSSSS})  and $PP$ is ruled out by (\ref{ArcTwice})
and the fact that the link is prime.

  \end{proof}
  
 Lemma~\ref{properties} severely restricts the types of surfaces in a non-split prime alternating link complement.  For example, Property~\ref{TwoPs} implies that
 there are no closed incompressible and meridianally incompressible surfaces in the complement of $L$. This was used by Menasco to show that there are no totally geodesic surfaces in such a link complement.
 
 \section{Counting genus two surfaces}
 We first bound the number of  genus two surfaces in $M_L$.
Lemma~\ref{GenusTwoWords}  follows from the arguments in Menasco  \cite{Menasco1984}.  We give the argument for completeness.
  
 \begin{lemma} \label{GenusTwoWords}  Suppose $F_0$ is a closed incompressible  genus two surface in $M_L$. 
 Let $F$ be the result of a maximal number of meridianal compressions on $F_0$.
 Then $F$ is a four-punctured sphere.
 Furthermore, when placed in  $|F|$ minimizing standard position relative to $S_+ \cup S_-$, 
 $F$ intersects $S_+$ in either a single $PPPP$ curve or in  two $PSPS$ curves. 
 \end{lemma}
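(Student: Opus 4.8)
The plan is to combine an Euler characteristic bookkeeping for surfaces in standard position with the combinatorial constraints of Lemma~\ref{properties}. First I would record the effect of a meridianal compression: it cuts $F_0$ along a meridian-parallel curve and caps the two resulting boundary circles with once-punctured meridian disks. Since each such cap is an annulus, the operation preserves the Euler characteristic while adding two punctures, so after any number of meridianal compressions the resulting surface $F$ has $\chi(F)=\chi(F_0)=-2$. A meridianal compression preserves incompressibility, and $F$ is the result of a maximal such sequence, so $F$ is incompressible and meridianally incompressible; after placing it in $|F|$ minimizing standard position, every component satisfies Lemma~\ref{properties}. In particular $F$ has at least one puncture, since a closed meridianally incompressible incompressible surface is excluded by Property~\ref{TwoPs}.

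Next I would set up the counting formula. Writing $c_+,c_-$ for the numbers of curves of $F\cap S_+$ and $F\cap S_-$, $\sigma$ for the number of saddles, and $P$ for the number of punctures, I would build a cell structure on $F$ from $F\cap Q$: the saddles are $4$-valent vertices (each saddle meets $S_+$ in two arcs and $S_-$ in two arcs), the punctures are $2$-valent vertices, and the complementary regions are exactly the disks of $F\cap B_\pm$ guaranteed by Property~\ref{Disks}. Counting vertices, edges and faces, and filling in the punctures, then yields
\[
\chi(F)=c_+ + c_- - \sigma - P,
\]
together with the identities that the $S_+$ words contain $P$ occurrences of the letter $P$ and $2\sigma$ occurrences of $S$ in total, and likewise on $S_-$. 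Getting the factor of two at each saddle correct, and hence this formula, is one of the technical points to verify carefully.

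With the formula in hand the argument becomes arithmetic. Each word has even length at least four by Properties~\ref{NoSSSS}--\ref{EvenLength4}, so the total $S_+$ word length $P+2\sigma$ is at least $4c_+$, and likewise at least $4c_-$; averaging and substituting $c_+ + c_- = \chi(F)+\sigma+P$ gives $P\le -2\chi(F)=4$ for each component. The same inequality excludes components with $\chi\ge 0$ (they would need $P\le 0$, whereas Property~\ref{TwoPs} forces $P\ge 2$) and with $\chi=-1$ (this would force $P=2$ and hence a non-integral genus via $2g+P=3$), so $F$ is connected with $\chi(F)=-2$. Since punctures are created in pairs, $P$ is even, and $P\ge 2$, so $P\in\{2,4\}$.

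Finally I would rule out $P=2$ and enumerate $P=4$. If $P=2$, the inequalities $P\ge 2c_\pm$ force $c_+=c_-=1$ and then $\sigma=2$, so the unique curve of $F\cap S_+$ carries a word with two $P$'s and four $S$'s; being the only curve it is innermost, yet four $S$'s distributed among two $P$'s must place two $S$'s consecutively, contradicting Property~\ref{Innermost}. Hence $P=4$, whence $c_+,c_-\le 2$ and $\sigma=c_++c_--2$. A short case analysis on how four $P$'s and $2\sigma$ $S$'s fill words of even length at least four that avoid the form $P^iS^j$ eliminates the mixed splits by odd-length or too-short words and leaves only $(c_+,c_-,\sigma)=(1,1,0)$, giving a single $PPPP$ curve, or $(2,2,2)$, giving two $PSPS$ curves. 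In either case $F$ is connected with $\chi=-2$ and $P=4$, hence a four-punctured sphere. I anticipate the delicate step is the exclusion of the twice-punctured torus $P=2$, where the interplay between the forced single innermost curve and Property~\ref{Innermost} is exactly what is needed.
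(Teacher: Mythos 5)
Your proposal is correct, but it proves the lemma by a genuinely different mechanism than the paper does. The paper's own proof is a short combinatorial case analysis: it asserts (in effect via Lemma~\ref{numcompressions}) that a genus two surface has at most four punctures after maximal meridianal compression; it rules out zero or two punctures because the lone curve would have to be $PSPS$, and properties (\ref{Balance}) and (\ref{SaddleTwice}) then force a second curve carrying two more $P$'s than exist; and in the four-puncture case it uses innermost-curve reasoning to arrive at one $PPPP$ or two $PSPS$ words. You instead build a cell structure with saddles as $4$-valent vertices and the disks of $F\cap B_\pm$ as faces, obtaining $\chi(F)=c_++c_--\sigma-P$; this is a punctured-surface version of Lemma~\ref{Polygons}, which the paper develops only later (Section~\ref{CountingArgument}) for the general-genus bound, and your inequality $P\le-2\chi(F)$ is the genus-two shadow of Lemma~\ref{WordsLength}. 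Your bookkeeping checks out ($\sigma+P$ vertices, $2\sigma+P$ edges and $c_++c_-$ faces give $\chi(\bar F)=c_++c_--\sigma$, minus $P$ to unfill the punctures), your pigeonhole-plus-(\ref{Innermost}) exclusion of $P=2$ is a valid substitute for the paper's Balance argument, and your enumeration of $(c_+,c_-,\sigma)$ correctly reduces to $(1,1,0)$ and $(2,2,2)$. What your route buys is precisely what the paper leaves implicit: a self-contained derivation of $P\le 4$ that does not presuppose the compression count, and an explicit exclusion of disconnected $F$ (e.g.\ a twice-punctured torus together with an annulus), which the paper's jump from ``four punctures'' to ``four-punctured sphere'' glosses over; what it costs is length and the need to verify the Euler characteristic formula. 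Note that, like the paper, you assume Menasco's facts that meridianal compression preserves incompressibility and that every non-sphere component of an incompressible surface must meet $S_+$ (so that Lemma~\ref{properties} and property (\ref{TwoPs}) apply componentwise); this matches the paper's level of rigor but deserves an explicit citation.
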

 \begin{proof} 
Meridianally compress $F_0$ to obtain the meridianally incompressible surface $F$, and place $F$ in 
 $|F|$ minimizing standard position.  Since $F_0$ is incompressible, $F$ intersects $S_+$ in at least one curve. 
 The word associated to every intersection curve has length at least four, and has at least two $P$'s by Lemma \ref{properties} (\ref{TwoPs}) and (\ref{EvenLength4}). 
By Lemma~\ref{properties} (\ref{NoSSSS}), every word is either of the form $PSPS$, or $PPPP$, or has length at least six.
Since $F_0$ has genus two, there are four meridianal punctures in $F$, and we have the following cases.
 
 1) If  $F$ has zero or two punctures, then the intersection pattern with $S_+$ has only two $P$'s.  
 Then there can then be only a single intersection curve which must have the form $PSPS$. 
 But a saddle through which this curve passes has a distinct curve passing through its opposite side, 
 so at least four $P$'s appear in curves of $F \cap S_+$.
 
 2) If $F$ is a 4-punctured sphere, then every curve in $F \cap S_+$ has associated word of the form 
 $PPPP$ or $PSPS$, with at least two $P$'s. 
Since $L$ is alternating, an innermost curve cannot have two consecutive saddles not separated by punctures. 
Moreover, if it contains a saddle, then  $F \cap S_+$ contains a second innermost curve. 
Since just four $P$'s are available, there is either a single $PPPP$ curve which is innermost on both sides
or each of two  innermost curves must have associated 
word  $PSPS$ and this accounts for all $P$'s and therefore all curves.
\end{proof}

We now count the number of such curves.
Suppose $F_0$ is a closed incompressible  genus two surface in $M_L$, and $F$ is obtained from $F_0$ by a sequence of two meridianal compressions
and that $F$ is in standard position relative to $S_+ \cup S_-$ with $|F|$ minimized.

\begin{lemma}\label{genus2curvecount} 
The number of  isotopy classes of curve configurations of $F \cap S_+$ with $F$ arising from meridianal compressions of a genus two incompressible surface in $M_L$ is less than  $2n^3$.
\end{lemma}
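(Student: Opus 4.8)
The plan is to treat separately the two configurations allowed by Lemma~\ref{GenusTwoWords} --- a single $PPPP$ curve, or a pair of $PSPS$ curves --- and to bound the number of isotopy classes in each family. The only configuration with enough independent ``gates'' to reach cubic order is the $PPPP$ curve, so I expect that family to supply the $n^3$ term and the pairs of $PSPS$ curves to be of strictly lower order.

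First I would isolate the combinatorial encoding that makes the count finite and explicit. Between two consecutive letters of its word, a curve of $F\cap S_+$ runs through a single complementary region of the diagram on $S_+$; since each such region is a disk, the subarc of the curve lying in it is determined up to isotopy by its two endpoints on the region's boundary. Consequently an isotopy class of curve configuration is determined by the cyclic sequence of \emph{gates} it traverses --- the arcs of $L$ crossed at the $P$'s and the bubbles entered at the $S$'s --- together with the faces occupied by the intervening segments. Counting configurations thus reduces to counting admissible labelled cyclic words, and the task is to bound how many of the gate labels may be chosen independently.

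For a $PPPP$ curve there are no saddles, so it is a simple closed curve on $S_+$ meeting the $4$-valent diagram graph transversally in four points and crossing four arcs $a_1,a_2,a_3,a_4$ in cyclic order. By minimality of $|F|$ and meridianal incompressibility, both complementary disks must contain crossings, so the curve cuts the diagram into two nontrivial tangles (a Conway sphere). The key step is to show that these four arcs are not independent: consecutive arcs must be cofacial, and once three of the gates are fixed, the requirements that the curve close up, be simple, and bound a genuinely nontrivial tangle in an alternating prime diagram determine the fourth arc and the internal routing up to a bounded number of choices. This would cut the naive count $\binom{2n}{4}\sim n^4$ down to a constant times $n^3$.

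For a pair of $PSPS$ curves, Properties~(\ref{SaddleTwice})--(\ref{Balance}) force the two curves to pass through exactly two common bubbles, each curve threading each bubble once, with the local passage at a crossing determined up to finitely many possibilities; between the two bubbles each curve crosses a single arc. I would therefore choose the unordered pair of bubbles, in at most $\binom{n}{2}$ ways, and check that the connecting segments contribute only a bounded routing factor, so this family contributes at most $O(n^2)$ and sits comfortably inside the budget. The main obstacle is the $PPPP$ estimate: proving the rigidity statement that only three of the four gates are free --- this is exactly where essentiality, meridianal incompressibility, and the alternating and prime hypotheses must be used to pin the configuration down --- and then merging curves that are isotopic in $M_L$ rather than merely on $S_+$ and tracking the constants so that the grand total is strictly less than $2n^3$.
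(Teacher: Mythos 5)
Your outline follows the same two-case split as the paper (a single $PPPP$ curve giving the cubic term, a pair of $PSPS$ curves giving a quadratic term), but the step you yourself flag as ``the main obstacle'' --- the rigidity statement that three of the four gates determine the fourth --- is not a technical detail to be checked later; it \emph{is} the proof, and you have not supplied it. Moreover, the argument you gesture at (essentiality, meridianal incompressibility, nontrivial tangles, Conway spheres) is not where the rigidity comes from. The paper's argument is a short primeness trick: if two distinct $PPPP$ curves $c_1$ and $c_2$ pass through the same three punctures, then their two remaining punctures can be joined by a closed curve meeting the diagram in exactly those two points, i.e.\ realizing a $PP$ word; in a reduced prime alternating diagram such a curve is trivial, so the two ``fourth'' punctures lie on the same edge and $c_1$ is isotopic to $c_2$. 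Hence the fourth puncture is determined \emph{exactly}, not ``up to a bounded number of choices.'' The same trick, not a routing analysis, settles the $PSPS$ case: two arcs leaving the same side of a saddle, each crossing one puncture, and entering the same side of a second saddle cobound a curve with word $PP$, so a $PSPS$ curve is completely determined by its unordered pair of saddle-sides, of which there are $\binom{2n}{2}$; and by Lemma~\ref{properties}~(\ref{Balance}) the first $PSPS$ curve determines its partner, which runs through the opposite sides of the same two bubbles.

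The weaker rigidity you propose would also fail to prove the lemma as stated, because the lemma asserts the explicit constant $2$. The paper's counts are $(2n)(2n-1)(2n-2)/4 = 2n^3-3n^2+n$ for $PPPP$ curves (an ordered choice of three of the $2n$ edges, with the fourth forced, divided by $4$ for cyclic relabeling) plus $\binom{2n}{2}=2n^2-n$ for $PSPS$ configurations, summing to $2n^3-n^2<2n^3$. Determination ``up to a bounded number of choices'' only yields $Cn^3$ for an uncontrolled $C$, and your count of $\binom{n}{2}$ pairs of bubbles also drops data: each bubble has two sides through which a curve can pass, and the side matters, so the natural count is over the $2n$ saddle-sides rather than the $n$ crossings.
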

\begin{proof}
There are $2n$ arcs in an $n$-crossing link diagram.
For a $PPPP$ word, the last puncture is determined by the location of the other three. Indeed, suppose two distinct $PPPP$ curves, $c_1$ and $c_2$, have exactly three punctures that coincide. Then one can form a closed curve going through the other two punctures, and corresponding to a $PP$ word. But such a curve  in a reduced alternating diagram of a prime link is trivial, and therefore $c_1$ is isotopic to $c_2$.
A cyclic reordering of the initial edge punctured gives the same curve, so we divide by four to get the number of configurations. 
Hence the number of $PPPP$ curves is less than $ (2n) (2n-1)(2n-2)/4 = 2n^3-3n^2 +n $, 
 obtained by picking three successive edges of the diagram to cross. 

We now bound the number of  $PSPS$ curves.
There are $n$ crossings, each with two sides through which a curve can pass and contribute an $S$ to a word.
Suppose two curves emerge from the same side of a crossing going in the same direction, 
cross distinct punctures, and then enter a second saddle
 on the same side of a second crossing. 
Then there is a closed curve intersecting the diagram in two punctures only, formed by joining the two
arcs after they leave the first saddle and again just before they enter the second.  This loop intersects the diagram in 
a $PP$ word, and since the diagram is  prime, this loop is trivial and intersects the same arc of the diagram twice.
It follows that the two original arcs are isotopic and that a $PSPS$ curve is completely 
determined by a choice of two saddles. 
One $PSPS$ curve then determines the second, since it determines the location of the second pair of saddles.
The number of $PSPS$ curves is then at most  $ {2n \choose 2} =  2n^2-n $.
Adding to the previous count gives a bound on the number of configurations of $2n^3-3n^2 +n  + 2n^2-n   = 2n^3-n^2  <  2 n^3$.
\end{proof}

\begin{theorem}\label{genus2count} 
The number of closed incompressible genus two surfaces in $M_L$ is less than  $12n^3$.
\end{theorem}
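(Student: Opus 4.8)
The plan is to combine the two preceding lemmas with a bound on how many genus two surfaces can meridianally compress to a single four-punctured sphere. First I would start with an arbitrary closed incompressible genus two surface $F_0 \subset M_L$ and invoke Lemma~\ref{GenusTwoWords}: performing a maximal sequence of meridianal compressions produces a meridianally incompressible four-punctured sphere $F$, which in $|F|$ minimizing standard position meets $S_+$ either in a single $PPPP$ curve or in two $PSPS$ curves. This exhibits every genus two surface as an ``un-compression'' of one of these standard four-punctured spheres, so the strategy is to bound the number of possible $F$ and then bound the number of $F_0$ that can produce each $F$.

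For the first bound I would argue that the isotopy class of $F$ is determined by the configuration of curves $F \cap S_+$. By Lemma~\ref{properties}(\ref{Disks}) each curve of $F \cap S_+$ bounds a disk in $B_+$, and since $B_+$ is a ball this disk is unique up to isotopy; the $S$ letters of the associated words record exactly the saddles, so the curves of $F \cap S_-$, together with the disks they bound in $B_-$, are forced by the $S_+$ data. Hence $F$ is reconstructed as the union of these disks, and the number of possible $F$ is at most the number of curve configurations, which is less than $2n^3$ by Lemma~\ref{genus2curvecount}.

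The remaining step, which I expect to be the main obstacle, is to bound the number of incompressible genus two surfaces that meridianally compress to a fixed $F$. Reversing a meridianal compression is a meridianal tubing, and since $F$ has four punctures the surface $F_0$ is recovered by adding two meridianal tubes that pair up the four punctures along $L$. There are exactly three ways to partition four punctures into two pairs, and I would show that each partition yields at most two isotopy classes of incompressible genus two surface (the remaining choice being how each tube closes up and runs along the link), so that at most $6$ surfaces $F_0$ compress to a given $F$. Multiplying the two bounds, and noting that overcounting is harmless for an upper bound when a single $F_0$ arises from several $F$, gives fewer than $6 \cdot 2n^3 = 12n^3$ surfaces. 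The delicate points are confirming that the tube choices failing to produce an incompressible genus two surface are correctly discarded and that primeness and the alternating hypothesis force the claimed rigidity, exactly as in the $PP$-loop arguments used in Lemma~\ref{genus2curvecount}.
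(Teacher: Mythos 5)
Your overall architecture is the same as the paper's: every closed incompressible genus two surface compresses to a four-punctured sphere $F$ in one of the two standard configurations (Lemma~\ref{GenusTwoWords}), the number of configurations is less than $2n^3$ (Lemma~\ref{genus2curvecount}), and each configuration supports at most $6$ tubings, giving $12n^3$. The genuine gap is in your accounting for the $6$, precisely the step you flag as the main obstacle. You propose to show that each of the three partitions of the four punctures into pairs yields at most two isotopy classes of tubed surface. That intermediate claim is false, and the correct count is distributed differently: a partition whose pairs interleave along the link yields \emph{no} embedded tubing at all, while a non-interleaved partition yields \emph{three} tubings, not two --- both tubes can run along the short arcs of $L$ between their punctures, or one tube can run the long way around the link and enclose (nest over) the other tube, in two ways. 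This is the $n=2$ case of Mossessian's formula ${2n \choose n}$ for the number of embedded tubings of $2n$ punctures, which the paper cites; for four punctures on a knot it gives $0+3+3 = {4 \choose 2} = 6$. So your total of $6$ is correct, but the lemma you would need (``at most two per partition'') cannot be proved, because three distinct embedded tubings exist for each non-interleaved partition; your proof would stall exactly there unless you replace it with the interleaving-plus-nesting analysis.

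A second, smaller omission: when $L$ has more than one component, the four punctures need not lie on a single circle, and tubes must join punctures lying on the same component of $L$. The paper handles this case separately (two components meeting two punctures each give at most $2 \times 2 = 4$ tubings; one component meeting all four punctures gives at most $6$), so the bound of $6$ per configuration survives, but your argument, which treats the punctures as cyclically ordered along a single knot, needs this case added. On the positive side, your first step --- that the configuration $F \cap S_+$ determines $F$ up to isotopy via the disks it bounds in $B_+$ and $B_-$ --- makes explicit a point the paper leaves implicit in passing from Lemma~\ref{genus2curvecount} to the theorem, and your observation that overcounting only helps an upper bound is sound.
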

\begin{proof}
A genus two surface is obtained from a meridianally incompressible surface by tubing together pairs of boundary components.
By Lemma \ref{GenusTwoWords}, every closed incompressible surface meridianally compresses
to a surface with four boundary punctures. 

We obtain a closed surface from a surface with meridian boundary curves
by a  tubing procedure that joins punctures on the same component of $L$ in pairs.
These pairs of punctures cannot be interleaved as one traverses once around a component of  the link,
as otherwise the tubed surface could not be embedded.
If $L$ is a knot, there are two ways to partition the 4 punctures into pairs with no interleaving,
and each of these  gives rise to three pairs of tubes.
Figure~\ref{Tubing1} shows  a knot $L$ and a four-punctured sphere in $M_L$. Figures \ref{Tubing2}--\ref{Tubing7} show the six possible tubings. This is the $n=2$ case of the general formula for the number of ways to form $n$ tubes starting with $2n$ curves, which equals ${2n \choose n}$ \cite{Mossessian}. 

If $L$ is a link with more than one component, then each component of $L$ meets at most four punctures. 
If two components meet two punctures each, then there are two ways to 
undo a meridianal boundary compression and 
obtain a closed surface for each of the components, giving a total of four choices.
If one component of $L$ meets all four punctures, 
there are at most six ways to add tubes to $F$ as before.

Lemma~\ref{genus2curvecount} then implies that the number of closed incompressible surfaces of genus two in $M_L$ is less than $6(  2 n^3) = 12n^3$.
\end{proof}

\begin{figure}
\centering
\begin{subfigure}[b]{0.24 \textwidth}
\centering
\includegraphics[scale=0.5]{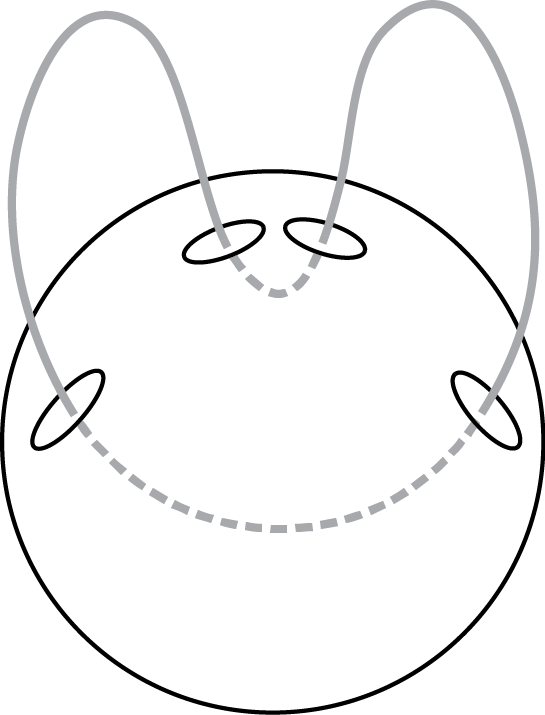}
\caption{}
\label{Tubing1}
\end{subfigure}
\begin{subfigure}[b]{0.24 \textwidth}
\centering
\includegraphics[scale=0.5]{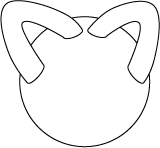}
\caption{}
\label{Tubing2}
\end{subfigure}
\begin{subfigure}[b]{0.24 \textwidth}
\centering
\includegraphics[scale=0.5]{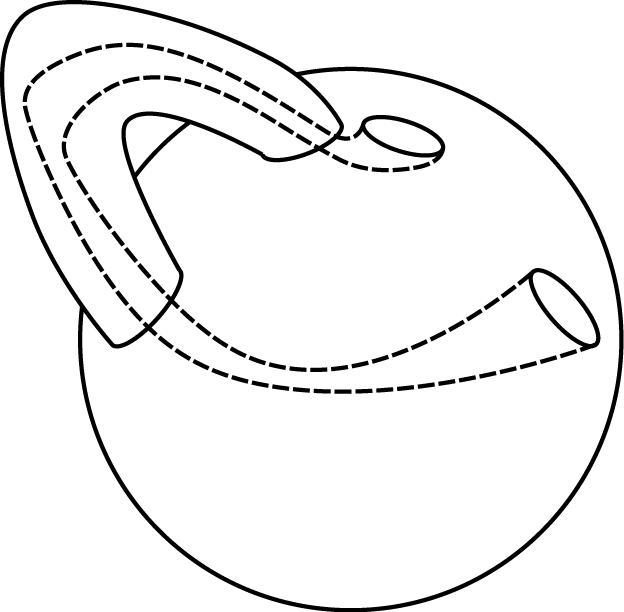}
\caption{}
\label{Tubing3}
\end{subfigure}
\begin{subfigure}[b]{0.24 \textwidth}
\centering
\includegraphics[scale=0.5]{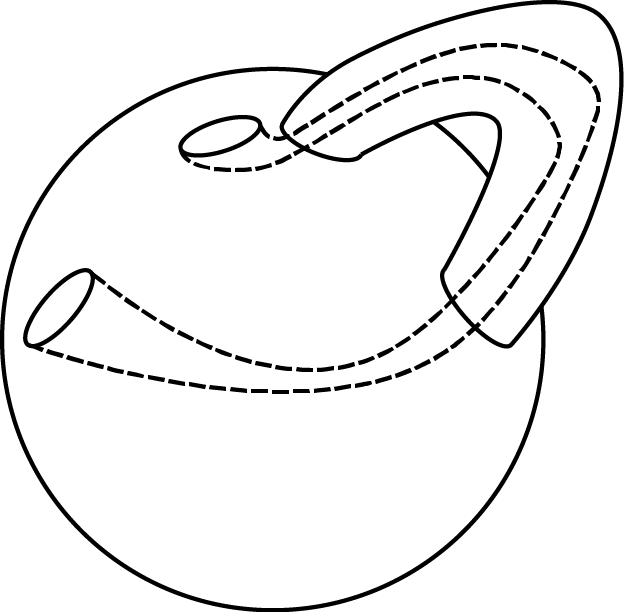}
\caption{}
\label{Tubing4}
\end{subfigure}
\begin{subfigure}[b]{0.24 \textwidth}
\centering
\includegraphics[scale=0.5]{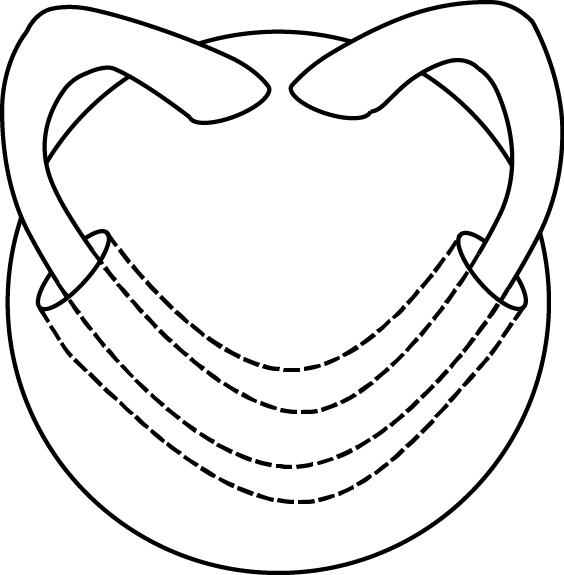}
\caption{}
\label{Tubing5}
\end{subfigure}
\begin{subfigure}[b]{0.24 \textwidth}
\centering
\includegraphics[scale=0.5]{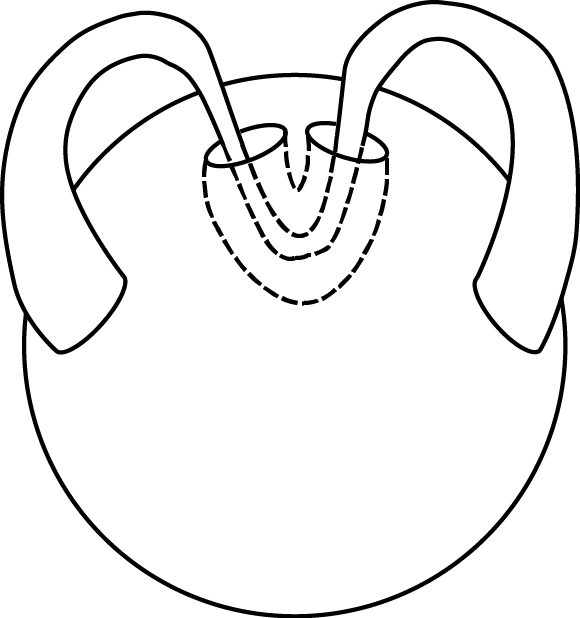}
\caption{}
\label{Tubing6}
\end{subfigure}
\begin{subfigure}[b]{0.24 \textwidth}
\centering
\includegraphics[scale=0.5]{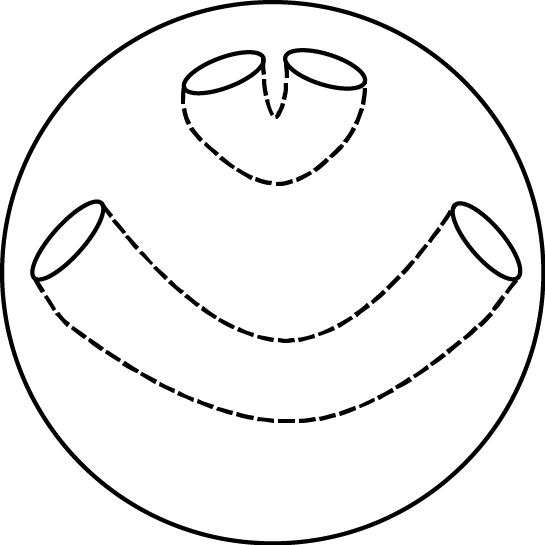}
\caption{}
\label{Tubing7}
\end{subfigure}
\caption{Six ways to add two tubes to a four-punctured sphere in $M_L$}
\end{figure}
\label{Tubing}

\section{Decomposing a surface into polygons}\label{CountingArgument}

Let $F_0 \subset M_L$ be an incompressible  genus $g$ surface and let $F$ be a meridianally incompressible surface obtained from $F_0$.   Place  $F$ in standard position relative to $S_+ \cup S_-$ with $|F|$ minimized.
In this section, we  bound the number of curves in $F\cap{S_+}$ and we bound the maximum length of the word associated to each curve.
The  method is based on studying a decomposition of $F$ 
into polygons given by its intersections with $S_+$ and $S_-$ .

 \begin{lemma}\label{numcompressions} 
 Suppose that $F_0$ is an embedded genus $g$ incompressible surface in $M_L$ and that $F$ is obtained by
 meridianal compression on $F_0$ along pairwise non-parallel curves. 
Then the number of meridianal compressions is at most $2g-2$ and the number of meridianal boundary
curves of $F$ is at most $4g-4$.
\end{lemma}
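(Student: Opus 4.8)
The plan is to run an Euler characteristic argument. First I would pin down the effect of a single meridianal compression on the topology of the surface: surgering along a curve parallel to a meridian cuts the surface and caps the two resulting circles with once-punctured meridian disks, i.e.\ with annuli running out to $\partial M_L$. Since an annulus has zero Euler characteristic and is attached along a single circle, this operation leaves $\chi$ unchanged while adding exactly two meridianal boundary curves. (Equivalently, in $S^3$ the compression raises $\chi$ by two and creates two new intersection points with $L$; passing to $M_L$ deletes these two points and lowers $\chi$ by two.) Hence after $k$ meridianal compressions one has $\chi(F)=\chi(F_0)=2-2g$ and $F$ has exactly $2k$ meridianal boundary curves. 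The two assertions of the lemma are therefore equivalent, and it suffices to prove $\#\partial F = 2k \le 4g-4$, i.e.\ $-\chi(F)\ge \tfrac12\,\#\partial F$.

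Next I would control the components $F_1,\dots,F_m$ of $F$. Because $F_0$ is incompressible, $F$ inherits incompressibility, so $F$ has no sphere or disk components (a meridian is essential on $\partial M_L$ and bounds no disk in the complement of a non-split prime link). By Lemma~\ref{properties}\,(\ref{TwoPs}) and the remark following it, $M_L$ contains no closed incompressible and meridianally incompressible surface, so $F$ has no closed components and every $F_i$ has $\partial F_i\neq\emptyset$. Finally, the hypothesis that the compressions are along pairwise non-parallel curves rules out annulus components: for $g\ge 2$, an annulus component of $F$ would pull back to an annulus in $F_0$ cobounded by two of the compressing curves, forcing two of them to be parallel.

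The key step is to show that every \emph{planar} component of $F$ carries at least four boundary curves, which upgrades a crude estimate to the sharp bound. For this I would use homology. All of $\partial F$ consists of meridians, and $H_1(M_L)$ is free abelian with basis the meridians $\mu_1,\dots,\mu_\ell$ of the $\ell$ components of $L$. For a planar component $F_i$ the boundary class $[\partial F_i]$ vanishes in $H_1(F_i)$, hence in $H_1(M_L)$; writing each boundary curve as $\pm\mu_j$, the vanishing of $\sum\pm\mu_j$ forces the signed meridian count on each component of $L$ to be zero, so the number of boundary curves of $F_i$ on each $L_j$ is even and $\#\partial F_i$ is even. Together with the exclusion of disks and annuli this gives $\#\partial F_i\ge 4$, hence $\#\partial F_i\le -2\chi(F_i)$, for every planar component; for a non-planar component the inequality $\#\partial F_i\le -2\chi(F_i)$ is automatic since $\chi(F_i)=2-2h_i-\#\partial F_i$ with $h_i\ge 1$.

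Summing over components yields $\#\partial F=\sum_i \#\partial F_i\le -2\sum_i\chi(F_i)=-2\chi(F)=4g-4$, whence the number of compressions is $k=\tfrac12\,\#\partial F\le 2g-2$. The main obstacle is the sharpness of the last two paragraphs: a purely Euler-characteristic bound permits pairs of pants and only yields $6g-6$ boundary curves, so it is the homological parity constraint on the meridian boundaries (excluding odd-boundaried planar pieces) together with the non-parallel hypothesis (excluding annuli) that removes the low-complexity components and produces the stated constants.
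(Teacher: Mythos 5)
Your proposal is correct and takes essentially the same route as the paper: both arguments cut the surface into components, exclude disk and annulus pieces using incompressibility and the non-parallel hypothesis, establish that each component has an even number of meridianal boundary curves (the paper via parity of intersection with $L$ in $S^3$, you via the meridian basis of $H_1(M_L)$ --- the same fact), and then do Euler-characteristic bookkeeping. Your per-component inequality $\#\partial F_i \le -2\chi(F_i)$, summed over components, is just a repackaging of the paper's decomposition into four-punctured spheres of Euler characteristic $-2$, each contributing at most four boundary curves, so the two proofs coincide in substance.
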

\begin{proof}
The compressing curves in $F_0$ cut $F_0$ into a collection of complementary surfaces $F_i$.
Each $F_i$ has an even number of boundary curves, since 
meridianal compression results in a surface that meets $L$ in an even number of punctures.
No component $F_i$ an annulus, since the meridianal compressions occur along non-parallel curves.
Thus the Euler characteristic of each $F_i$ is negative and even.
A surface with negative even Euler characteristic can be divided into four-punctured spheres, each with Euler characteristic
-2.  Since their Euler characteristics add to the Euler characteristic of $F_0$, equal to $2-2g$, the number of resulting
4-punctured spheres is at most $(2-2g)/(-2) =g-1$.  Each 4-punctured sphere contributes
at most four meridianal boundary curves to $F$, and  it follows that 
the number of  meridianal boundary curves after a maximal set of meridianal compressions, is at most $4g-4$.
 \end{proof}

\begin{corollary}\label{WordsNumber} A closed genus $g$ surface  $F_0$ yields at most $2g-2$ curves in $F\cap S_+$.
\end{corollary}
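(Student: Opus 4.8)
The plan is to combine the structural restrictions on words from Lemma~\ref{properties} with the puncture bound of Lemma~\ref{numcompressions} through a single counting identity. First I would establish the bookkeeping fact that the total number of letters $P$ occurring among the words associated to all curves of $F\cap S_+$ is exactly equal to the number of meridianal boundary curves (punctures) of $F$. To see this, collapse the tubular neighborhood of $L$ so that each meridianal boundary curve of $F$ becomes a single point lying on a strand of $L$; since $L$ is contained in $S_+$ away from the bubbles, this point lies on $S_+$, and the curve of $F\cap S_+$ incident to it crosses the strand transversally exactly once, contributing precisely one $P$. Conversely, every $P$ records such a crossing, so punctures correspond bijectively to the $P$'s on $S_+$. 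As a sanity check, in the genus two case of Lemma~\ref{GenusTwoWords} the four punctures yield either one $PPPP$ curve or two $PSPS$ curves, giving four $P$'s in either case.

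With this identity in hand, I would apply Lemma~\ref{numcompressions}, which bounds the number of meridianal boundary curves of $F$ by $4g-4$. It follows that the total number of $P$'s appearing in the words of $F\cap S_+$ is at most $4g-4$.

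Finally, Lemma~\ref{properties} (\ref{TwoPs}) ensures that every word contains at least two $P$'s, so each curve of $F\cap S_+$ consumes at least two of the available $P$'s. Distributing the total budget of at most $4g-4$ among the curves therefore forces the number of curves to be at most $(4g-4)/2 = 2g-2$, which is the desired bound.

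The argument is brief once the first step is secured, so the main point requiring care is the identification of punctures with $P$'s: one must check both that every puncture is crossed by some $S_+$ curve and that no puncture is crossed more than once. Both follow from the local model after collapsing the neighborhood of $L$, where the incident curve meets the strand in a single transverse point, but this verification is where I would spend the most attention.
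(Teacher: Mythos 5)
Your proposal is correct and takes essentially the same route as the paper: Lemma~\ref{numcompressions} gives at most $4g-4$ punctures, and Lemma~\ref{properties}~(\ref{TwoPs}) gives at least two $P$'s per curve, yielding at most $2g-2$ curves. The only difference is that you spell out the puncture--$P$ correspondence explicitly, whereas the paper leaves it implicit in the definition of the words; this is a harmless (and reasonable) elaboration, not a different argument.
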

\begin{proof}
From Lemma~\ref{numcompressions} we can have at most $4g-4$ punctures in $F$. 
By Lemma \ref{properties} (\ref{TwoPs}), each curve accounts for at least two $P$'s, so 
there are at most $2g-2$ curves in $F\cap S_+$ or $F\cap S_-$.
\end{proof}

We now bound the maximum length of the word associated to each curve. 
By filling in each puncture of $F$ we obtain a new surface $\bar F$ with Euler characteristic 
$\chi(\bar F) = \chi(F)+p$. 
Decompose $\bar F$ into polygons as follows. Outside the bubbles containing the saddles of $\bar F$ we take the arcs of $\bar F \cap Q$ to form part of a
graph on $\bar F$. Each saddle in $\bar F$ is a disk with four arcs of $\bar F \cap Q$ meeting its boundary, and we cone these to the center of the saddle, where we add a vertex.
The resulting graph  has valence four vertices at the center of the saddles and cuts $\bar F$ into a collection of polygons. 
By Lemma \ref{properties} (\ref{Disks}), each polygon is homeomorphic to a disk which lies in $B_+$ or $B_-$ away from neighborhoods of its vertices that lie in the bubbles.
Four polygons meet at each of the saddle disks of $\bar F$. 

 The Euler characteristic of $\bar F$ can be computed by summing the contribution of each disk region.
 
 \begin{lemma}\label{Polygons} 
The Euler characteristic of $\bar F$ can be computed by adding the contribution of each disk region,
with a disk region $E$ contributing $1- {s_0}/{4}$ to $\chi(\bar F)$, where 
  $s_0$ is the number of $S$'s in the word associated to the boundary of $E$.
\end{lemma}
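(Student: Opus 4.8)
The plan is to read off $\chi(\bar F)$ directly from the polygonal cell structure described above and then redistribute the count face by face. Realize $\bar F$ as a CW complex whose $0$-cells are the centers of the saddle disks, whose $1$-cells are the arcs into which $\bar F \cap Q$, together with the coning arcs inside the bubbles, is cut by these vertices, and whose $2$-cells are the disk regions $E$; that each $2$-cell is genuinely a disk is exactly Lemma~\ref{properties}~(\ref{Disks}). Writing $v,e,f$ for the numbers of cells in each dimension, the goal is to establish the two identities $\chi(\bar F)=v-e+f=f-v$ and $\sum_E s_0(E)=4v$, where the sum runs over the disk regions $E$; the desired formula then falls out immediately.

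For the first identity, every vertex sits at the center of a saddle and the resulting graph is $4$-valent, so the handshake relation gives $2e=\sum_{\text{vertices}}\deg=4v$, hence $e=2v$ and $\chi(\bar F)=v-e+f=f-v$. For the second, I would use the fact, noted just above the statement, that four polygons meet at each saddle disk: each $4$-valent vertex is surrounded by exactly four face-corners, and each such corner contributes one letter $S$ to the boundary word of the face occupying that sector. Summing over faces counts every saddle-corner once, so $\sum_E s_0(E)=4v$. Combining the two identities yields
\[
\chi(\bar F)=f-v=\sum_E 1-\tfrac14\sum_E s_0(E)=\sum_E\Bigl(1-\tfrac{s_0(E)}{4}\Bigr),
\]
which is the claim.

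The step I expect to require the most care is checking that the two pieces of bookkeeping remain valid for the degenerate regions, namely those bounded by a curve carrying no $S$, such as a $PPPP$ curve, whose boundary meets no bubble and contributes no vertices or corners. The formula assigns such a region the value $1-0/4=1$, and I would confirm consistency by observing that a saddle-free curve lies in $Q$ off the bubbles, hence on both $S_+$ and $S_-$, and therefore bounds disks in $B_+$ and $B_-$ that together form a sphere contributing $\chi=2=1+1$. It also remains to verify the exactness of the correspondence ``$S$ in the boundary word of $E$ $\leftrightarrow$ saddle-corner of $E$'', and to note that each filled puncture becomes an interior point of an edge rather than a vertex, so the letters $P$ play no role in the Euler characteristic count; with these points in hand the distribution argument is complete.
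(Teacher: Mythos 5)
Your proof is correct and takes essentially the same route as the paper: both compute $\chi(\bar F)$ from the same polygonal decomposition with $4$-valent vertices at saddle centers, the paper distributing $+1/4$ per vertex, $-1/2$ per edge, and $+1$ per face locally to each polygon, which is exactly the local form of your two global identities $e=2v$ and $\sum_E s_0(E)=4v$. Your explicit treatment of the degenerate saddle-free case ($s_0=0$, where the boundary circle carries no vertices and the handshake/CW bookkeeping needs separate justification via the two-disk sphere component) is a point the paper's proof glosses over, and you resolve it correctly.
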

 
 \begin{proof}
 Enumerate all curves $C_i, i=1, ..., r$, in $F \cap S_+$ and $F \cap S_-$. Suppose $C_i$ is the boundary of a polygon $E_i$ of $\bar F$ with interior in 
 $B_+$ or  $B_-$. 
 The Euler characteristic of $\bar F$ can be recovered by summing the contributions of each of these polygons $E_i, i=1, ..., r$. 
 The Euler characteristic  $\chi(\bar F) = v-e+f$ can be  distributed so that $+1/4$ is  allocated to each 
 vertex of a polygon (shared by four distinct polygons), -1/2 to each edge (shared by two distinct polygons), and +1 to each face.
 Thus the contribution of a polygon with $s_0$ vertices and the same number of edges is
 $$
 s_0/4 - s_0/2 + 1 = 1 - s_0/4.
 $$
  \end{proof}

\begin{lemma}\label{WordsLength}
Suppose $F_0$ is a closed genus $g$ surface. Any curve in $F \cap S_\pm$ has an associated word of length at most $20g-16$.
\end{lemma}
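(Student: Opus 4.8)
The plan is to bound the number of $P$'s and the number of $S$'s in the word of a single curve separately and then add them. Bounding the $P$'s is immediate: an embedded curve meets each meridianal puncture at most once, so the number $p_0$ of $P$'s in any one word is at most the total number $p$ of punctures of $F$, which is at most $4g-4$ by Lemma~\ref{numcompressions}. The content of the lemma is therefore the bound on the number $s_0$ of $S$'s.

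To bound $s_0$ I would bound the total number $\sigma$ of saddles of $\bar F$; since no curve passes through the same saddle twice (Lemma~\ref{properties}(\ref{SaddleTwice})), this gives $s_0\le\sigma$ for every word at once. The key is to turn Lemma~\ref{Polygons} into a count of $\sigma$. Each saddle is a four-valent vertex of the polygonal decomposition and lies on exactly four polygon boundaries, so $\sum_E s_0(E)=4\sigma$ and hence
$$
\chi(\bar F)=\sum_E\left(1-\frac{s_0(E)}{4}\right)=R-\sigma,
$$
where $R$ is the total number of curves in $F\cap(S_+\cup S_-)$. Now $\chi(F)=2-2g$, because a meridianal compression caps a curve with two once-punctured meridian disks, each an annulus of Euler characteristic $0$ in $M_L$, and so leaves $\chi$ unchanged; filling the $p$ punctures then gives $\chi(\bar F)=\chi(F)+p=2-2g+p$. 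Finally each curve on $S_+$ and each curve on $S_-$ carries at least two $P$'s by Lemma~\ref{properties}(\ref{TwoPs}), and the $P$'s on each sphere total $p$, so $R\le p$. Substituting, $\sigma=R-\chi(\bar F)\le p-(2-2g+p)=2g-2$, whence every word has length $p_0+s_0\le(4g-4)+(2g-2)=6g-6$, comfortably below $20g-16$.

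The step I expect to require the most care is the Euler-characteristic bookkeeping underlying $\chi(\bar F)=R-\sigma$ and $\chi(\bar F)=2-2g+p$: one must check that each saddle genuinely contributes four to $\sum_E s_0(E)$ (that is, that the four polygons meeting a saddle are counted with the right multiplicity), that each puncture contributes exactly one $P$ to each of $S_+$ and $S_-$ so that the puncture count and the $S_\pm$ $P$-counts all equal $p$, and that the capping disks in a meridianal compression are punctured and hence do not raise $\chi$. Once these accountings are verified the saddle bound is essentially automatic, and the wide gap between $6g-6$ and the claimed $20g-16$ absorbs any slack in the constants.
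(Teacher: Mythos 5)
Your proof is correct, and it is in fact sharper than the paper's, though both rest on the same foundation: the polygonal decomposition of $\bar F$ and the Euler characteristic bookkeeping of Lemma~\ref{Polygons}. The difference is in how that bookkeeping is deployed. The paper argues locally: it bounds $\chi(\bar F)$ above by $2g-2$ (at most $2g-2$ curves per side by Corollary~\ref{WordsNumber}, each polygon with saddles contributing at most $+1/2$) and below by $2-2g$, concludes that no single polygon can contribute less than $4-4g$, and converts this into at most $16g-12$ saddles per word, hence length at most $(4g-4)+(16g-12)=20g-16$. You argue globally: summing Lemma~\ref{Polygons} over all polygons and using $\sum_E s_0(E)=4\sigma$ gives the exact identity $\chi(\bar F)=R-\sigma$, and combining this with $R\le p$ and the equality $\chi(\bar F)=2-2g+p$ yields a bound on the \emph{total} number of saddles, $\sigma\le 2g-2$, which Lemma~\ref{properties}~(\ref{SaddleTwice}) then transfers to every word simultaneously. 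Your route needs two inputs the paper's proof does not use: the exact preservation of $\chi$ under meridianal compression (the paper only ever invokes the inequality $\chi(\bar F)\ge 2-2g$), which you justify correctly since the capping pieces are once-punctured disks, i.e.\ annuli in $M_L$; and property~(\ref{SaddleTwice}). In exchange you get $6g-6$ instead of $20g-16$, plus the stronger structural fact that $F$ has at most $2g-2$ saddles in total; fed into Theorem~\ref{surfacebound}, this would improve the exponent from $40g^2$ to $12(g-1)^2$. One bookkeeping convention should be made explicit in your write-up: $R$ must be the number of polygons, i.e.\ the number of curves of $F\cap S_+$ \emph{plus} the number of curves of $F\cap S_-$, so that a saddle-free curve lying in the projection plane is counted once for each of the two disks it bounds; with that convention both your identity $\chi(\bar F)=R-\sigma$ and your estimate $R\le p$ (at least two $P$'s per curve, the $P$'s on each of $S_+$ and $S_-$ summing to exactly $p$) go through as stated.
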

\begin{proof}
Suppose $w$ is the word associated to a curve $C \subset \bar F \cap S_+$ and $C$ is the boundary of a disk $E$ in
$\bar F \cap B_+$. Let $s_0$ be the number of saddles in $w$. By Lemma \ref{properties} (\ref{NoSSSS}), no word has a single saddle, so
we have the following cases:

1) The word $w$ consists solely of $P's$, \textit{i.e.} $s_0=0$. 
Then the curve $C$ also bounds a disk in $B_-$ and $F \cap S_+$ is a single curve, $\bar F$ is a sphere, and
the length of  $w$ is at most $4g-4$ by Lemma~\ref{numcompressions}.

2) $s_0 \ge 2$, and $E$ contributes  $1- s_0/4$ to $\chi(\bar F)$ .

In Case (2)  each curve contributes at most $+1/2$ to $\chi(\bar F)$.  Since there are at most $2g-2$ curves in $F\cap S_+$,  
any subset of the words in $F\cap S_+$ bounds polygons  that together contribute at most $g-1$ to  $\chi(\bar F)$.
 The same applies to words in $F\cap S_-$. 
Thus the  positive contribution to $\chi(\bar F)$ from any subset of  complementary polygonal disks is bounded above by $2g-2$.
Now  $ \bar F $ has at most $g-1$ components, so  $\chi(\bar F) \le 2g-2$.

Now $2-2g \leq   \chi(F) + p  =  \chi(\bar F) \le  2g-2$.  No combination of complementary polygonal disks
contributes more than  $2g-2$ to $\chi(\bar F)$, and all complementary polygonal disks together sum to $ \chi(\bar F)$.
Thus no single polygonal disk can contribute less than $ \chi(\bar F) -(2g-2)  \ge 4-4g$. A word of length
greater than $20g-16$ has at least $16g-12$ saddles, and therefore contributes less than $4-4g$ to $\chi(\bar F)$, by Lemma \ref{Polygons}.
We conclude that the length of any word is at most  $20g-16$.
\end{proof}

 \section{Bounding the number of surfaces}\label{Bound}
 
Fix, $g\geq 2$ and let $N_g(L)$ 
denote the number of  closed incompressible genus $g$ surfaces in $S^3-L$,
up to isotopy. The following theorem bounds $N_g(L)$ by a polynomial
function of the  crossing number of $L$. The argument gives explicit values for the 
constants in the bound.
 
\begin{theorem} \label{surfacebound}
Suppose a non-split reduced prime alternating link diagram $L$ has  $n$ crossings. Then 
\begin{center}
$N_g(L)\leq   C_g n^{40g^2}$, 
\end{center}
where $C_g $ is a constant that depends only on $g$.
\end{theorem}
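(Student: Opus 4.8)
The plan is to follow the template of the genus-two count in Theorem~\ref{genus2count}: bound the number of isotopy classes of the curve configuration $F\cap S_+$ associated to the meridianally compressed surface, and then argue that each configuration determines the original closed surface $F_0$ up to a number of choices depending only on $g$. First I would associate to each closed incompressible genus $g$ surface $F_0$ a meridianally incompressible surface $F$ obtained by a maximal set of meridianal compressions along pairwise non-parallel curves, placed in $|F|$ minimizing standard position. By Lemma~\ref{numcompressions} this uses at most $2g-2$ compressions and produces at most $4g-4$ punctures; by Corollary~\ref{WordsNumber} the set $F\cap S_+$ consists of at most $2g-2$ curves, and by Lemma~\ref{WordsLength} each associated word has length at most $20g-16$.

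Next I would count configurations. A single curve of $F\cap S_+$ is specified by its cyclic word in the letters $P,S$ together with location data: each $P$ marks a puncture on one of the $2n$ arcs of the diagram, and each $S$ marks one of the $n$ bubbles. Since the word length is at most $20g-16$, the number of cyclic $\{P,S\}$-patterns is bounded by a constant depending only on $g$, while the location data for one curve contributes at most $(2n)^{20g-16}$ choices; hence a single curve has at most $c_g\, n^{20g-16}$ possibilities. As $F\cap S_+$ has at most $2g-2$ curves, the number of configurations is at most
\[
\bigl(c_g\, n^{20g-16}\bigr)^{2g-2}\le C'_g\, n^{(20g-16)(2g-2)}=C'_g\, n^{40g^2-72g+32}\le C'_g\, n^{40g^2},
\]
the last inequality holding since $n\ge 1$ and $40g^2-72g+32\le 40g^2$.

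Finally I would bound the number of closed surfaces giving rise to a fixed configuration. Away from the bubbles the curves of $F\cap S_+$ and $F\cap S_-$ coincide with $F\cap Q$ and share all their punctures, differing only in how they route through each saddle bubble; at a saddle the routing below $S_-$ is the complementary pairing forced by the routing above $S_+$, so $F\cap S_+$ determines $F\cap S_-$. By Property~\ref{properties}(\ref{Disks}) the complementary disks in $B_\pm$ are determined up to isotopy by their boundary curves, so the configuration determines $F$ up to isotopy. The closed surface $F_0$ is then recovered from $F$ by reversing the meridianal compressions, that is, by tubing the at most $4g-4$ punctures together in non-interleaved pairs; the number of such tubings is at most $\binom{4g-4}{2g-2}$, a constant $C''_g$ depending only on $g$. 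Thus at most $C''_g$ isotopy classes of closed surfaces yield any one configuration, and multiplying gives $N_g(L)\le C''_g\,C'_g\,n^{40g^2}$; absorbing the constants into $C_g$ finishes the argument.

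The main obstacle is the claim that the top pattern $F\cap S_+$ pins down the whole surface $F$. Everything hinges on the bottom pattern $F\cap S_-$ not contributing a second, independent polynomial factor, which would square the exponent to roughly $80g^2$. Verifying that the saddles already recorded in $F\cap S_+$ force the routing of $F\cap S_-$, together with the rigidity that allows one to pass from a combinatorial configuration back to a genuine isotopy class of embedded surface (using primeness and reducedness as in the genus-two count), is where the real work lies; the enumeration of configurations in the middle step is routine once Corollary~\ref{WordsNumber} and Lemma~\ref{WordsLength} are in hand.
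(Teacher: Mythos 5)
Your proposal is correct and follows essentially the same route as the paper: maximal meridianal compression together with Corollary~\ref{WordsNumber} and Lemma~\ref{WordsLength} to get at most $2g-2$ words of length at most $20g-16$, a per-letter count of saddle/puncture locations yielding the $n^{(20g-16)(2g-2)}\le n^{40g^2}$ factor, and Mossessian's $\binom{4g-4}{2g-2}$ bound on tubings. The step you single out as the ``real work'' --- that $F\cap S_+$ determines $F\cap S_-$ (complementary routing at each saddle) and hence determines $F$ up to isotopy via the disks of Lemma~\ref{properties}(\ref{Disks}) --- is left implicit in the paper, which simply counts the ways the words can occur in the diagram, so your explicit justification is an elaboration of the same argument rather than a departure from it.
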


\begin{proof}
We choose one surface $F_0$ in every isotopy class, and perform a maximal set of meridianal surgeries
on $F_0$ to obtain a surface $F$ that is in $|F|$ minimizing standard position. 
There are at most $2g-2$ words in $F\cap S_+$ by Lemma \ref{WordsNumber}, and 
each word has length at most $20g-16$ by Lemma \ref{WordsLength}. 
We give a bound on the number of non-isotopic meridianally incompressible surfaces 
by counting  all possible ways that these words can occur  in a fixed link diagram.

There are $n$ crossings, and every crossing gives $2$ choices for the location of an $S$   
adjacent to that crossing. There are $2n$ between-crossing edges in the link diagram,
giving  $2n$ choices for where to locate a puncture. 
So for each curve of length at most $20g-16$, there are at most $(4n)^{20g - 16} $ ways to choose successive saddles and punctures. 
For $2g-2$ curves in $F\cap S_+$, the total number of choices is bounded by $(4n)^{(20g - 16)(2g-2)} \le (4n)^{40g^2 } $.
 
 To bound the number of closed surfaces, we consider all ways to tube together up to $4g-4$ meridian boundary curves
 without creating self-intersections.
This number of possible tubings is computed to be ${4g-4 \choose 2g-2}$ by Mossessian in \cite{Mossessian}. 
Thus the number of closed  incompressible genus $g$ surfaces is at most
 ${4g-4 \choose 2g-2} (4n)^{40g^2 } =    {4g-4 \choose 2g-2} 4^{40g^2 } n^ {40g^2} $.  
 We let $C_g =  {4g-4 \choose 2g-2} 4^{40g^2 } $ and the Theorem follows.
\end{proof}

We can obtain the same bounds in most closed manifolds obtained by surgery on $L$
\begin{corollary}
Let $L$ be a reduced alternating hyperbolic link with $n$ crossing. The number of genus $g$ surfaces in the closed manifold obtained by $(p,q)$-surgery on $M_L$ is at most $ C_g n^{40g^2}$, with finitely many exceptional surgeries.  The number of exceptional surgeries depends only on $g$.
\end{corollary}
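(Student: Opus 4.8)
The plan is to bound the number of genus $g$ surfaces in a surgered manifold by controlling how the count from Theorem~\ref{surfacebound} transfers under Dehn filling. First I would observe that an incompressible surface in a $(p,q)$-surgered manifold $M_{p,q}$ either can be isotoped off the surgery solid torus, in which case it lives in $M_L$ and is counted by Theorem~\ref{surfacebound}, or it must intersect the filling torus essentially. For the first class, the bound $C_g n^{40g^2}$ applies directly, so the entire content of the corollary concerns surfaces of the second type and an argument that, away from finitely many surgeries, these contribute no new isotopy classes beyond a $g$-dependent additive constant (absorbable into $C_g$).

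The key step is a geometric limiting argument. I would invoke Thurston's hyperbolic Dehn surgery theorem: since $L$ is hyperbolic, all but finitely many $(p,q)$-fillings yield a hyperbolic manifold $M_{p,q}$, and as $|p|+|q|\to\infty$ the core geodesic of the filling torus becomes arbitrarily short and the rest of $M_{p,q}$ converges geometrically to $M_L$. The strategy is then to isotope each genus $g$ incompressible surface to a least-area representative and apply the Gauss--Bonnet and Schoen curvature-estimate argument cited in the Overview. A least-area surface of bounded genus has bounded area, hence bounded diameter, and cannot penetrate deeply into a thin Margulis tube around the short core geodesic; for $|p|+|q|$ large enough the tube is so long and thin that any such surface is forced into the thick part, which is bi-Lipschitz close to $M_L$. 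One then pushes the surface off the filling torus entirely, realizing it as a surface in $M_L$ of the same genus, and the Theorem~\ref{surfacebound} count applies.

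The main obstacle, and the step requiring the most care, is making the transition from $M_L$ to $M_{p,q}$ injective on isotopy classes while keeping the exceptional set finite and $g$-dependent. Two distinct surfaces in $M_{p,q}$ could a priori become isotopic when pushed into $M_L$, or conversely a single $M_L$-class could fail to extend; I would control this by noting that the geometric convergence is uniform on the $\epsilon$-thick part, so for a fixed genus $g$ the relevant surfaces have area bounded by $2\pi(2g-2)$ and thus lie in a fixed compact piece of the thick part, on which the bi-Lipschitz maps become arbitrarily close to isometries. The number of fillings for which this uniformity fails to hold at the scale needed for genus $g$ is finite and determined by the Margulis constant together with $g$, which is precisely the assertion that the number of exceptional surgeries depends only on $g$.

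Finally I would assemble the pieces: for all non-exceptional $(p,q)$, every genus $g$ incompressible surface in $M_{p,q}$ is isotopic to one lying in the thick part and hence in $M_L$, the map to isotopy classes in $M_L$ is at most finite-to-one with multiplicity bounded in terms of $g$, and so $N_g(M_{p,q}) \le C_g' n^{40g^2}$ with $C_g'$ depending only on $g$; after possibly enlarging the constant we recover the stated bound $C_g n^{40g^2}$. The only genuinely delicate estimate is the diameter-versus-tube-length comparison, which is where Schoen's curvature bound enters and where the genus dependence of the exceptional set is pinned down.
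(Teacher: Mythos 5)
Your approach is genuinely different from the paper's, and it has a real gap at exactly the claim that matters most: that the number of exceptional surgeries depends only on $g$. The tools you invoke --- Thurston's hyperbolic Dehn surgery theorem and geometric convergence of the filled manifolds to $M_L$ --- are compactness statements whose quantitative content depends on the geometry of $M_L$. How large $|p|+|q|$ must be before the core geodesic's Margulis tube is long and thin enough to exclude a least-area surface of area at most $2\pi(2g-2)$ depends on the cusp shape of $M_L$ (equivalently, on the rate of geometric convergence), not just on the Margulis constant and $g$. So your argument, as written, proves only the weaker statement: for each fixed $L$, all but finitely many surgeries satisfy the bound, with the size of the exceptional set depending on both $L$ and $g$. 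To repair this within your framework you would need universal quantitative Dehn filling estimates (Hodgson--Kerckhoff type results phrased in terms of normalized slope length, together with a universal count of the slopes of bounded normalized length); you assert the uniformity but supply no such ingredient.

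The paper avoids all of this analysis with a short topological argument: a closed incompressible genus $g$ surface in the surgered manifold that cannot be isotoped off the filling solid torus can be arranged to meet it in meridian disks, and removing those disks yields an essential surface in $M_L$ of genus at most $g$ with boundary slope $(p,q)$. Theorem 4.1 of Hass--Rubinstein--Wang \cite{Hass:99} bounds the number of slopes that can arise as boundary slopes of essential surfaces of genus at most $g$ by a function $N(g)$ of $g$ alone; this is the single ingredient that delivers the $g$-only dependence of the exceptional set, and it is precisely what your proof is missing. Two smaller points: your concern about injectivity on isotopy classes is unnecessary, since surfaces lying in $M_L$ that are isotopic there are automatically isotopic in the filled manifold, so distinct classes in the filled manifold realized off the core stay distinct in $M_L$; and your framing of the essential-intersection case as contributing ``a $g$-dependent additive constant absorbable into $C_g$'' does not match the statement --- for non-exceptional surgeries such surfaces do not occur at all, while the exceptional surgeries are simply excluded from the bound.
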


\begin{proof}
 The only new closed incompressible genus $g$ surfaces after  $(p,q)$-surgery are ones resulting from genus $g$ incompressible surfaces with slope $(p,q)$ in $M_L$. Theorem 4.1 in \cite{Hass:99} provides an upper bound for the number of distinct slopes $(p,q)$ that bound incompressible surfaces, as a function of the genus $g$ . The bound is given by a function $N(g)$ given in  \cite{Hass:99}. For manifolds obtained by surgeries other than this finite collection, the incompressible surfaces are incompressible in the complement of $L$.
 \end{proof}

\begin{remark}
The argument in Theorem~\ref{surfacebound}  also gives an upper bound for the number of non-isotopic embedded essential meridianal surfaces in $S^3-L$.
For such surfaces we don't need to consider all ways to tube meridians, so the bound is smaller for each genus. 
\end{remark}

\begin{remark}
If we fix a link $L$, and let the genus vary, Theorem~\ref{surfacebound} together with the explicit expression for $C_g$ in the proof provide an exponential upper bound for the number of embedded surfaces of arbitrary genus in $S^3-L$. In particular, the number grows at most as $C_L^{g^2+g}$ for a constant $C_L$ that can be written explicitly in terms of the number of crossings of $L$. While Masters and Kahn and Markovic results provide a better growth for immersed surfaces in a closed hyperbolic 3-manifold, this is the first estimate of a similar nature for surfaces in any cusped manifold (in our case, for embedded surfaces in a prime alternating link complement).
\end{remark}

%
%
%
%
%


\begin{thebibliography}{90}
 
 
\bibitem{Haken:61}
W. Haken,
 \textit{Theorie der Normalfl\"achen:
Ein Isotopiekriterium f\"ur den Kreisknoten'}, {\em Acta Math.}, 105 (1961)
245--375.

\bibitem{Hass:95} J. Hass, \textit{Acylindrical surfaces in 3-manifolds}, Michigan Math. J. 42 (1995), no. 2, 357--365.

\bibitem{HassLagariasPippenger} J. Hass, J. C. Lagarias and N. Pippenger, \textit{The computational complexity of Knot and Link problems},  Journal of the ACM, 46 (1999) 185--211.

\bibitem{HassMenasco}  J. Hass, W. Menasco, \textit{Topologically rigid non-Haken 3-manifolds}, J. Austral. Math. Soc. Ser. A 55 (1993), no. 1, 60--71. 

 \bibitem{Hass:99} J. Hass, H. Rubinstein, S. Wang, \textit{Boundary-slopes of immersed surfaces in 3-manifolds}, J. Differential Geometry 52 (1999), 303--325.

\bibitem{KahnMarkovic} J. Kahn, V. Markovic, \textit{Counting essential surfaces in a closed hyperbolic three-manifold}, Geom. Topol. 16 (2012), 601--624.

\bibitem{Kneser:29}
H. Kneser,
``Geschlossene Fl\"achen in dreidimensionalen Mannigfaltigkeiten'',
{\em Jahresbericht Math. Verein.}, 28 (1929) 248--260.

\bibitem{Masters} J. D. Masters, \textit{Counting immersed surfaces in hyperbolic 3-manifolds},
Algebr. Geom. Topol. 5 (2005), 835--864.


\bibitem{Menasco1984} W. W.
Menasco, \textit{Closed incompressible surfaces in alternating
knot and link complements}, Topology {\bf 23} (1984), 37--44.

\bibitem{Mossessian} G. Mossessian, \textit{Stabilizing Heegaard Splittings of High-Distance Knots}, arXiv:1507.07231. 

 
\bibitem{Schoen} R. Schoen, \textit{Estimates for Stable Minimal Surfaces in Three Dimensional Manifolds,}  103 of Ann. of Math. Studies. Princeton University Press, 1983.

 \bibitem{Thurston} W. P. Thurston, \textit{The geometry and Topology of Three-Manifolds}, Electronic Version 1.1 (March 2002), http://www.msri.org/publications/books/gt3m/
  
\end{thebibliography}
\end{document}